\numberwithin{equation}{section}
\newcommand{\compl}{\mathbb{C}}
\newcommand{\R}{\mathbb{R}}
\newcommand{\Bn}{\mathbb B_n}
\newcommand{\Sn}{\mathbb S_n}
\newcommand\set[1]{\left\{#1\right\}}
\providecommand{\abs}[1]{\lvert#1\rvert}
\providecommand{\Abs}[1]{\Bigl\lvert#1\Bigr\rvert}
\def\A2w{A^2_\rho}
\def\F2w{\mathcal F^2_\psi}
\providecommand{\normp}[1]{\lVert#1\rVert_\psi}
\def\d{\partial}
\def\idd{i\partial\bar\partial}
\DeclareMathOperator{\Real}{Re}
\newtheorem{Thm}{Theorem}[section]
\newtheorem{theorem}[Thm]{Theorem}
\newtheorem{lemma}[Thm]{Lemma}
\newtheorem{proposition}[Thm]{Proposition}
\newtheorem*{thmA}{Theorem A}
\theoremstyle{definition}
\newtheorem{remark}[Thm]{Remark}
\newtheorem{example}[Thm]{Example}
\begin{document}
\title[On the dimension of the Fock type spaces]{On the dimension of the Fock type spaces}
\author{Alexander Borichev \and Van An Le \and El Hassan Youssfi}
\thanks{The results of Section 2 were obtained in the framework of the project 20-61-46016 by the Russian Science Foundation.\\
A.~Borichev and H.~Youssfi were partially supported by the project ANR-18-CE40-0035.} 

\address{Alexander Borichev: Aix--Marseille University, CNRS, Centrale Marseille, I2M, Marseille, France,\newline
St. Petersburg University, Saint Petersburg, Russia}
\email{alexander.borichev@math.cnrs.fr}
\address{Van An Le: Aix--Marseille University, CNRS, Centrale Marseille, I2M, Marseille, France,\newline
University of Quynhon, Department of Mathematics and Statistics,
170 An Duong Vuong, Quy Nhon, Vietnam}
\email{levanan@qnu.edu.vn}
\address{El Hassan Youssfi: Aix--Marseille University, CNRS, Centrale \newline\noindent Marseille, I2M, Marseille, France}
\email{el-hassan.youssfi@univ-amu.fr}

\keywords{Fock space, subharmonic function, plurisubharmonic function} 

\begin{abstract} We study the weighted Fock spaces in one and several complex variables. We evaluate the dimension of these 
spaces in terms of the weight function extending and completing earlier results by Rozenblum--Shirokov and Shigekawa.
\end{abstract}

\maketitle

\section{Introduction}

Let $\psi$ be a plurisubharmonic function on $\compl^n$, $n\ge 1$.  The weighted Fock space $\F2w$ is the space of entire functions $f$ such that 
$$ 
\normp{f}^2 =\int_{\compl^n}\abs{f(z)}^2 e^{-\psi(z)}\,dv(z)<\infty, 
$$
where $dv$ is the  volume measure on $\compl^n$.
Note that $\F2w$ is a closed subspace of $L^2(\compl^n, e^{-\psi} \,dv)$ and hence is a Hilbert space  endowed with the inner product 
$$
\langle f,g\rangle_\psi=\int_{\compl^n}f(z)\overline{g(z)}e^{-\psi(z)}\,dv(z),\qquad f,g \in \F2w.
$$

In this paper we study when the space $\F2w$ is of finite dimension depending on the weight $\psi$. 
This problem (at least for the case $n=1$) is motivated by some quantum mechanics questions, especially by the study of zero modes, eigenfunctions with zero eigenvalues.

In \cite[Theorem 3.2]{RoShi}, Rozenblum and Shirokov proposed a sufficient condition for the space $\F2w$ to be of infinite dimension, when $\psi$ is a subharmonic function. 

More precisely, they claimed that if $\psi$ is a finite subharmonic function on the complex plane such that the measure $\mu=\Delta\psi$ is of infinite mass: 
\begin{equation}\label{1}
\mu(\compl)=\int_{\compl}d\mu(z)=\infty,
\end{equation}
then the space $\F2w$ has infinite dimension.

(For the fact that if $\mu=\Delta\psi$ a non-trivial doubling measure, then $\F2w$ has infinite dimension see \cite[Theorem 11.45]{Has}).

We improve and extend somewhat the statement of Rozenblum--Shirokov in our paper, give a necessary and sufficient condition on $\psi$ for the space $\F2w$ to be of finite dimension,  
and calculate this dimension.

The situation is much more complicated in $\compl^n, n\ge 2$. Shigekawa established in \cite{Shigekawa} (see also \cite[Theorem 11.20]{Has} in a book by Haslinger), the following interesting result.

\begin{thmA}  \label{Shi}
Let $\psi: \compl^n \to \R$ be a $\mathcal C^\infty$ smooth function and let $\lambda_0(z)$ be the smallest eigenvalue of the Levi matrix
$$
L_\psi(z)=\idd \psi(z)=\left(\dfrac{\d^2\psi(z)}{\d z_j \d \overline{z_k}}\right)_{j,k=1}^n.
$$
Suppose that 
\begin{equation}\label{2}
\lim _{|z|\to \infty} |z|^2\lambda_0(z)=\infty.
\end{equation}
Then $\dim \F2w=\infty$.
\end{thmA}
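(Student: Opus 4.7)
The plan is to exhibit, for each $N$, an $N$-tuple of linearly independent elements of $\F2w$ by solving suitable $\dbar$-equations with H\"ormander's weighted $L^2$-estimate; the hypothesis $|z|^2\lambda_0(z)\to\infty$ is exactly what forces the H\"ormander constant $1/\lambda_0(z)$ to be $o(|z|^2)$ at infinity, which lets us absorb the derivatives of the cutoffs used in the construction. Concretely, fix distinct points $z_1,\dots,z_N\in\compl^n$ with $|z_j|$ very large and pairwise widely separated, and try to build $f_k\in\F2w$ satisfying $f_k(z_j)=\delta_{jk}$; the identity matrix $(f_k(z_j))$ will then force linear independence of $f_1,\dots,f_N$.

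For each $k$ choose a smooth bump $\chi_k$ equal to $1$ on a small ball about $z_k$, compactly supported in a slightly larger ball, and vanishing on neighborhoods of the other $z_j$'s, so that $\chi_k(z_j)=\delta_{jk}$ and $\alpha_k:=\dbar\chi_k$ is a compactly supported, $\dbar$-closed $(0,1)$-form. To force the holomorphic correction to vanish at each $z_j$, solve $\dbar u_k=\alpha_k$ by H\"ormander's theorem applied to the modified plurisubharmonic weight
\[
\varphi(z)=\psi(z)+2n\sum_{j=1}^{N}\log|z-z_j|^2,
\]
possibly after adding a bounded, compactly supported PSH correction to $\psi$ to make it PSH globally (such a correction changes $\F2w$ only up to an equivalent norm). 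Since $\idd\varphi\ge\lambda_0(z)\,I$, H\"ormander yields
\[
\int_{\compl^n}|u_k|^{2}e^{-\varphi}\,dv\;\le\;\int_{\compl^n}\frac{|\alpha_k|^2}{\lambda_0(z)}\,e^{-\varphi}\,dv,
\]
which is finite because $\alpha_k$ is compactly supported and $1/\lambda_0(z)=o(|z|^2)$ on that support. The log singularities of $\varphi$ together with the $L^2$-integrability then force $u_k(z_j)=0$ for every $j$, so $f_k:=\chi_k-u_k$ is entire with $f_k(z_j)=\delta_{jk}$.

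The main obstacle is verifying that $f_k$ actually belongs to $\F2w$: the H\"ormander bound only controls $u_k$ in $L^2(e^{-\varphi})$, and since $\varphi-\psi=4nN\log|z|+O(1)$ at infinity, $L^2(e^{-\varphi})$ is strictly larger than $L^2(e^{-\psi})$. To pass back one exploits that $\alpha_k$ has compact support, so $u_k$ is holomorphic outside a compact set; for $n\ge 2$ one uses Hartogs' extension to represent $u_k$ off a compact set as a globally entire function, then applies interior $L^2$ estimates for holomorphic functions at a scale set by $\lambda_0$, combined with the growth of $\psi$ forced by $|z|^2\lambda_0(z)\to\infty$, to absorb the polynomial factor $\prod_j|z-z_j|^{4n}$ and conclude $f_k\in L^2(e^{-\psi})$. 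Once $f_k\in\F2w$ is established, the $f_k$'s are the desired $N$ linearly independent elements, and letting $N\to\infty$ yields $\dim\F2w=\infty$.
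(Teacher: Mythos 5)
Your overall strategy (interpolation at $N$ points via H\"ormander's $\dbar$-estimate) is a reasonable alternative to the paper's route, but as written it has a genuine gap at exactly the place you flag as ``the main obstacle,'' and your proposed repair does not close it. H\"ormander with the weight $\varphi=\psi+2n\sum_j\log|z-z_j|^2$ puts $u_k$ in $L^2(e^{-\varphi}\,dv)$, and since $\varphi=\psi+4nN\log|z|+O(1)$ at infinity this space is strictly \emph{larger} than $L^2(e^{-\psi}\,dv)$; the implication ``$u_k$ is holomorphic off a compact set and $\int|u_k|^2e^{-\psi}(1+|z|)^{-4nN}\,dv<\infty$, hence $\int|u_k|^2e^{-\psi}\,dv<\infty$'' is false in general (already for $\psi(z)=|z|^2$ there are entire functions in the larger space but not in the smaller one). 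Nor does hypothesis \eqref{2} give a usable pointwise lower bound on $\psi$ with which to ``absorb'' the factor $\prod_j|z-z_j|^{4n}$: condition \eqref{2} constrains only the complex Hessian, and adding a pluriharmonic term such as $\Re(z_1^m)$ leaves $L_\psi$ and $\lambda_0$ unchanged while making $e^{-\psi}$ arbitrarily large on large regions, so no argument based on ``the growth of $\psi$'' alone can succeed. To make your scheme work you must feed the hypothesis into the H\"ormander weight quantitatively: replace $\psi$ by $\psi-\psi_M$ with, say, $M\ge 2nN+\tfrac32 n$, which by the computation of Remark~\ref{r32} is plurisubharmonic outside a compact set, so that the factor $|z|^{-2M}$ coming from $e^{-\psi_M}$ dominates the polynomial loss at infinity. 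This is precisely the mechanism of the paper's proof (Theorem~\ref{dl1} combined with Remark~\ref{r32}), which, instead of interpolating, produces a single nonzero $f$ with $\int|f|^2(1+|z|^2)^{-3n}e^{-\widetilde\psi_M}\,dv<\infty$ and then multiplies it by monomials of degree up to about $M-\tfrac32 n$.

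A second, smaller problem: you propose to make the weight globally plurisubharmonic by ``adding a bounded, compactly supported PSH correction to $\psi$.'' No such correction exists: a plurisubharmonic function on $\compl^n$ vanishing outside a compact set is bounded above, hence constant by the Liouville property, hence identically zero. The weight really has to be modified \emph{inside} a ball, which is what the paper does by solving the Dirichlet problem for the homogeneous complex Monge--Amp\`ere equation (Bedford--Taylor) with boundary data $(\psi-\psi_M)|_{\partial\Bn(0,r_M)}$ and gluing. The remaining ingredients of your proposal are sound: the coefficient $2n$ in the logarithmic poles does force $u_k(z_j)=0$ (since $\int_{|z|<\epsilon}|z|^{-4n}\,dv=\infty$), and the matrix $\bigl(f_k(z_j)\bigr)=I$ does give linear independence.
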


Note that the condition \eqref{2} is not necessary. A corresponding example is given in \cite[Section 11.5]{Has} ($\psi(z, w) = |z|^2|w|^2 + |w|^4$). In this paper, we improve Theorem~A by presenting a weaker condition for the dimension of the Fock space $\F2w$ to be infinite. Furthermore, we give several examples that show how far is our condition from being necessary. Finally, we consider several examples (classes of examples) of weight 
functions $\psi$ of special form and evaluate the dimension of $\F2w$.

The rest of the paper is organised as follows. The case of dimension one is considered in Section 2, and the case of higher dimension is considered in Section 3.

\subsubsection*{Acknowledgments}
We thank Friedrich Haslinger and Grigori Rozenblum for helpful remarks.

\section{The case of \texorpdfstring{$\mathbb C$}{C}}

Given a subharmonic function $\psi:\compl\to[-\infty,\infty)$ denote by $\mu_\psi$ the corresponding Riesz measure, $\mu_\psi=\Delta\psi$. 
Next, consider the class $\mathcal M^d$ of the positive $\sigma$-finite atomic measures with masses which are integer multiples of $4\pi$. 
Given a $\sigma$-finite measure $\mu$, consider the corresponding atomic measure $\mu^d$,
$$
\mu^d=\max\Bigl\{\mu_1\in\mathcal M^d:\mu_1\le\mu\Bigr\}.
$$
In fact, for every atom $a\delta_x$ of $\mu$, $\mu^d$ has at the point $x$ an atom of size $4\pi$ times the integer part of $a/(4\pi)$. 
Denote $\mu^c=\mu-\mu^d$, $\mu^d=\sum_k 4\pi\delta_{x_{k,\mu}}$.

Denote by $\mathcal M^c$ the class of the positive $\sigma$-finite measures $\mu$ such that $\mu^d=0$. Note that if 
$\psi$ is finite on the complex plane, then $\mu_\psi$ has no point masses and $\mu_\psi\in \mathcal M^c$. 
Furthermore, if $\mu_\psi\in \mathcal M^c$, then $e^{-\psi}\in L^1_{loc}(v)$.

\begin{lemma}
Let $\psi,\psi_1$ be two subharmonic functions such that $(\mu_\psi)^c=(\mu_{\psi_1})^c$.
Then $\dim\F2w=\dim \mathcal F^2_{\psi_1}$.
\label{lem7}
\end{lemma}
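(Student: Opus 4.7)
The plan is to exhibit an isometric isomorphism $T\colon \mathcal F^2_\psi \to \mathcal F^2_{\psi_1}$ given by multiplication by a meromorphic function, thereby matching the dimensions. Since by hypothesis $(\mu_\psi)^c=(\mu_{\psi_1})^c$, we have $\mu_\psi-\mu_{\psi_1}=(\mu_\psi)^d-(\mu_{\psi_1})^d$, a signed atomic measure whose atoms have masses that are integer multiples of $4\pi$. Because $\mu_\psi$ and $\mu_{\psi_1}$ are locally finite Radon measures, the supports of $(\mu_\psi)^d$ and $(\mu_{\psi_1})^d$ are discrete in $\mathbb C$, so by Weierstrass factorization I can produce entire functions $F,G$ whose zero sets are exactly these supports with multiplicities equal to the atom masses divided by $4\pi$. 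Then $\Delta\log|F|^2=(\mu_\psi)^d$ and $\Delta\log|G|^2=(\mu_{\psi_1})^d$, and the distribution $\psi-\psi_1-\log|F|^2+\log|G|^2$ has vanishing Laplacian on $\mathbb C$; by Weyl's lemma and simple connectivity of $\mathbb C$, it equals $\operatorname{Re} h$ for some entire function $h$. Equivalently, $e^{\psi-\psi_1}=|e^{h/2}F/G|^2$.

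With this in hand I define $Tf=e^{-h/2}(G/F)\,f$. Once $Tf$ is entire, the identity
$$|Tf(z)|^2 e^{-\psi_1(z)}=|f(z)|^2 e^{-\psi(z)}$$
is a direct consequence of the factorization above, so $T$ is linear, injective and isometric. Symmetrically, $T^{-1}g=e^{h/2}(F/G)\,g$ will send $\mathcal F^2_{\psi_1}$ into $\mathcal F^2_\psi$, once one shows $T^{-1}g$ is entire; the resulting two-sided isometry yields $\dim\mathcal F^2_\psi=\dim\mathcal F^2_{\psi_1}$.

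The main obstacle is proving that $F\mid f$ for every $f\in\mathcal F^2_\psi$ (and the analogous statement $G\mid g$ for $g\in\mathcal F^2_{\psi_1}$), which is what makes $Tf$ entire. At each atom $a_k$ of $(\mu_\psi)^d$ with mass $4\pi n_k$, the Riesz decomposition allows me to write, in a neighborhood of $a_k$,
$$\psi(z)=2n_k\log|z-a_k|+\varphi(z),$$
where $\varphi$ is subharmonic, hence upper semicontinuous and locally bounded above near $a_k$. If $f$ vanishes at $a_k$ to order $m$, then near $a_k$,
$$|f(z)|^2 e^{-\psi(z)}\gtrsim |z-a_k|^{2(m-n_k)},$$
and integrability in two real dimensions forces $2(m-n_k)>-2$, i.e.\ $m\ge n_k$. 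Hence $F\mid f$. This is the only nontrivial ingredient; the isometry identity, the surjectivity, and the construction of $F,G,h$ are all essentially bookkeeping.
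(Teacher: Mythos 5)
Your proposal is correct and follows essentially the same route as the paper: factor out the discrete parts of the Riesz measures by entire functions $F,F_1$, observe that the remainder is (pluri)harmonic and hence the real part of an entire function, and pass between the two spaces by multiplication by $\frac{F_1}{F}e^{(H_1-H)/2}$. The only difference is presentational: you make explicit, via the local Riesz decomposition near each atom, the divisibility $F\mid f$ needed for the multiplier to produce entire functions, a point the paper handles implicitly through the local integrability of $e^{-\psi^c}$ in its chain of equivalences.
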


\begin{proof}
Let $F,F_1$ be two entire functions with the zero sets, correspondingly, $\{x_{k,\mu_\psi}\}$ and $\{x_{k,\mu_{\psi_1}}\}$ 
(taking into account the multiplicities). Then 
$\Delta\log|F|^2=(\mu_\psi)^d$, $\Delta\log|F_1|^2=(\mu_{\psi_1})^d$, and the functions $h=\psi-\log|F^2|-\psi^c$, $h_1=\psi_1-\log|F_1^2|-\psi_1^c$ are harmonic. Let $h=\Re H$, $h_1=\Re H_1$ 
for some entire functions $H,H_1$. 

Given an entire function $f$ we have 
\begin{multline*}
f\in \F2w \iff \int_{\compl}|f(z)|^2 e^{-\psi(z)}\,dv(z)<\infty \iff \\
\int_{\compl}|f(z)|^2 e^{-\psi^c(z)-h(z)-\log|F(z)|^2}\,dv(z)<\infty \iff \\
\int_{\compl}|f(z)e^{-H(z)/2}/F(z)|^2 e^{-\psi^c(z)}\,dv(z)<\infty \iff \\
\int_{\compl}|f(z)e^{-H(z)/2}/F(z)|^2 e^{-\psi_1^c(z)}\,dv(z)<\infty \iff \\
\int_{\compl}|f(z)e^{-H(z)/2}/F(z)|^2 e^{-\psi_1(z)+h_1(z)+\log|F_1(z)|^2}\,dv(z)<\infty \iff \\
\int_{\compl}|f(z)e^{-H(z)/2+H_1(z)/2}|F_1(z)/F(z)|^2 e^{-\psi_1(z)}\,dv(z)<\infty \iff \\
f\cdot\frac{F_1}{F}e^{-H/2+H_1/2}\in \mathcal F^2_{\psi_1}.
\end{multline*}
Thus,  $\dim\F2w=\dim \mathcal F^2_{\psi_1}$.
\end{proof}

\begin{lemma}
Let $\psi$ be a subharmonic function such that $\mu_\psi\in \mathcal M^c$. 
If $\dim\F2w<\infty$, then $\mu_\psi(\compl)<\infty$.
\label{rs}
\end{lemma}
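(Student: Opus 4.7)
My plan is to prove the contrapositive: assuming $\mu_\psi(\compl)=\infty$, we show $\dim\F2w=\infty$. Given any $N\in\mathbb{N}$, the goal will be to construct $N$ linearly independent elements of $\F2w$.

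By $\sigma$-finiteness and $\mu_\psi(\compl)=\infty$, I first choose $R>0$ with $m:=\mu_\psi(D(0,R))>4\pi(N+1)$ and split $\mu_\psi=\mu'+\mu''$, where $\mu'=\mu_\psi|_{D(0,R)}$. Put $\psi'(z):=\frac{1}{2\pi}\int\log|z-w|\,d\mu'(w)$ for the logarithmic potential of $\mu'$; it is subharmonic with $\Delta\psi'=\mu'$ and $\psi'(z)=\frac{m}{2\pi}\log|z|+O(|z|^{-1})$ at infinity, so the monomials $1,z,\ldots,z^{N-1}$ lie in $\mathcal F^2_{\psi'}$. The difference $\psi'':=\psi-\psi'$ is subharmonic with $\Delta\psi''=\mu''\geq0$ and is \emph{harmonic} on $D(0,R)$; on that simply connected disk $\psi''=\Real H$ for some holomorphic $H$.

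Next I transfer these monomials into $\F2w$ through a cutoff-and-$\dbar$ argument. Fix $\chi\in C_c^\infty(D(0,R))$ with $\chi\equiv1$ on $D(0,R/2)$ and, for $k=0,1,\ldots,N-1$, form the quasi-model $\tilde f_k:=z^k\chi e^{H/2}$. It is smooth, compactly supported, and holomorphic on $D(0,R/2)$, and there $|\tilde f_k|^2 e^{-\psi}=|z|^{2k}e^{-\psi'}$. Its obstruction to entireness, $g_k:=\dbar\tilde f_k=z^k e^{H/2}\,\dbar\chi$, is supported in the annulus $\{R/2<|z|<R\}$. A Hörmander-type $L^2$-estimate for $\dbar$ against the subharmonic weight $\psi$ (if necessary justified by the $\varepsilon$-mollification $\psi_\varepsilon=\psi\star\rho_\varepsilon+\varepsilon|z|^2$ and passage to a weak limit, or via a Berndtsson-type twisted inequality) produces a solution $u_k$ of $\dbar u_k=g_k$; an explicit Cauchy--Pompeiu representation of the particular solution yields $|u_k(z)|=O(|z|^{-1})$ at infinity, so $u_k\in L^2(\compl,e^{-\psi}\,dv)$. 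Hence $f_k:=\tilde f_k-u_k$ is entire and lies in $\F2w$.

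Finally, I establish linear independence. On $D(0,R/2)$ each $f_k$ equals $z^k e^{H/2}-u_k$; the principal parts $z^k e^{H/2}$ form a manifestly independent system (their Taylor expansions at $0$ give a triangular matrix). The Hörmander bound controls $\|u_k\|_\psi$ by a constant multiple of $\int_{R/2<|z|<R}|z|^{2k}|\dbar\chi|^2 e^{-\psi'}\,dv$, which tends to $0$ as $R\to\infty$ thanks to the decay $e^{-\psi'}\sim|z|^{-m/(2\pi)}$ with $m/(2\pi)>2N+2$. Taking $R$ large therefore forces any non-trivial combination $\sum_k c_k f_k$ to be dominated on a fixed small disk about $0$ by its leading part $\sum c_k z^k e^{H/2}\not\equiv0$, so that $\dim\F2w\geq N$ for every $N$. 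The main technical hurdle lies in the Hörmander estimate for the $\dbar$-operator with a merely subharmonic weight $\psi$ whose Riesz measure may have no absolutely continuous part (e.g.\ $\mu_\psi$ purely atomic in $\mathcal M^c$); classical versions presuppose $\Delta\psi>0$, and it is precisely the extension to this singular setting that makes the argument go beyond Rozenblum--Shirokov.
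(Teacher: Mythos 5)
Your strategy (contrapositive plus a cutoff-and-$\dbar$ construction) is genuinely different from what the paper does --- the paper simply refers to the proof of \cite[Theorem 3.2]{RoShi}, and the quantitative version it actually proves (Lemma~\ref{lem2.2}) is purely potential-theoretic, based on Hayman's estimate for the logarithmic potential of the tail measure. Unfortunately your argument has a genuine gap, and it sits exactly where you place your ``main technical hurdle'': the weighted $L^2$-estimate for $\dbar$ with the weight $e^{-\psi}$ and \emph{no loss}. For a merely subharmonic $\psi$, H\"ormander's theorem only gives a solution $u_k$ with $\int|u_k|^2e^{-\psi}(1+|z|^2)^{-2}\,dv\le C\int|g_k|^2e^{-\psi}\,dv$; the polynomial loss cannot be removed in general, because $\mu_\psi\in\mathcal M^c$ may be purely atomic and sparsely supported, so that $\psi$ is harmonic on arbitrarily large regions where the lossless estimate fails (already for $\psi\equiv 0$ on a disk of radius $T$ the minimal solution has norm of order $T\|g\|$). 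With the loss you only get $f_k\in\mathcal F^2_{\psi+2\log(1+|z|^2)}$, and infiniteness of that space does not transfer to $\F2w$ --- a finite amount of extra Riesz mass changes the dimension by a finite amount, which is precisely the content of the theorem being proved, so the reduction is circular. Your two proposed remedies do not repair this: mollifying to $\psi\star\rho_\varepsilon+\varepsilon|z|^2$ produces a constant $O(1/\varepsilon)$ that blows up in the limit, and invoking an unspecified ``Berndtsson-type twisted inequality'' is not a proof (such lossless estimates are known under extra hypotheses, e.g.\ $\Delta\psi$ doubling, which you do not have).

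Two subsidiary steps are also unjustified. First, you conflate two different solutions of $\dbar u_k=g_k$: the Cauchy--Pompeiu particular solution decays like $|z|^{-1}$ but carries no weighted norm bound, and $|z|^{-2}e^{-\psi}$ need not be integrable at infinity for a general subharmonic $\psi$ with $\mu_\psi\in\mathcal M^c$ (the local integrals of $e^{-\psi}$ near atoms of mass close to $4\pi$ can grow without bound); the minimal H\"ormander solution has the (degraded) norm bound but no pointwise decay; the two differ by an entire function and you cannot assume one solution enjoys both properties. Second, your linear-independence argument needs $\|u_k\|_\psi\to0$, but your bound for $\int|g_k|^2e^{-\psi}\,dv=\int_{R/2<|z|<R}|z|^{2k}|\dbar\chi|^2e^{-\psi'}\,dv$ uses the asymptotics $e^{-\psi'}\sim|z|^{-m/(2\pi)}$ \emph{inside} the annulus $R/2<|z|<R$, where they do not hold ($\psi'$ is the potential of $\mu|_{D(0,R)}$ and can be very negative there); moreover $R$ grows with $N$, so no uniform smallness is available. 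The standard fix for independence --- adding $(2k+2)\log|z|$ to the weight to force $u_k$ to vanish to order $k+1$ at the origin --- would repair that part, but the lossless estimate at infinity remains the essential missing ingredient; the machinery that actually supplies the required decay in this singular setting is the potential-theoretic argument with Hayman's lemma carried out in the proof of Lemma~\ref{lem2.2}.
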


See the proof of \cite[Theorem 3.2]{RoShi}. 

\begin{lemma}\label{lem2.1}
Let $\psi$ be a subharmonic function.
Then
$$
\dim\F2w\le \Bigl\lceil \frac{\mu_\psi(\compl)}{4\pi} \Bigr\rceil.
$$
\end{lemma}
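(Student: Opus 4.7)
If $M:=\mu_\psi(\compl)=\infty$, the bound is trivial, so assume $M<\infty$ and set $N_0=\lceil M/(4\pi)\rceil$. The plan is to argue by contradiction: if $\dim\F2w\ge N_0+1$, I will produce a nonzero $f\in\F2w$ for which $\normp{f}=\infty$. Under this hypothesis, imposing the $N_0$ linear conditions $f^{(j)}(0)=0$ for $0\le j\le N_0-1$ on an $(N_0+1)$-dimensional subspace of $\F2w$ yields a nonzero $f\in\F2w$ of the form $f(z)=cz^N+O(z^{N+1})$ with $N\ge N_0$ and $c\ne 0$.

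The argument rests on two standard estimates. First, Jensen's formula applied to $f$ gives
$$\frac{1}{2\pi}\int_0^{2\pi}\log|f(re^{i\theta})|^2\,d\theta\ge 2\log|c|+2N\log r,\qquad r>0,$$
since the contributions from the remaining zeros of $f$ are nonnegative. Second, for the circular average $M_\psi(r):=\frac{1}{2\pi}\int_0^{2\pi}\psi(re^{i\theta})\,d\theta$, I invoke the classical identity $M_\psi'(r)=\mu_\psi(\overline{B(0,r)})/(2\pi r)\le M/(2\pi r)$ valid for subharmonic $\psi$. Integrating from a radius $\rho>0$ at which $M_\psi(\rho)$ is finite (such $\rho$ exists since $\psi\in L^1_{\mathrm{loc}}$) yields $M_\psi(r)\le \frac{M}{2\pi}\log r+C$ for all $r\ge\rho$.

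Next I apply Jensen's inequality for the convex exponential to $g(\theta)=\log|f(re^{i\theta})|^2-\psi(re^{i\theta})$: substituting the two bounds above,
$$\frac{1}{2\pi}\int_0^{2\pi}|f(re^{i\theta})|^2 e^{-\psi(re^{i\theta})}\,d\theta\ge \exp\Bigl(\frac{1}{2\pi}\int_0^{2\pi}g(\theta)\,d\theta\Bigr)\ge C_1\, r^{2N-M/(2\pi)},\qquad r\ge\rho.$$
Multiplying by $r$ and integrating from $\rho$ to infinity shows $\normp{f}^2\ge 2\pi C_1\int_\rho^\infty r^{2N-M/(2\pi)+1}\,dr$, which diverges because $N\ge N_0\ge M/(4\pi)$ forces the exponent to satisfy $2N-M/(2\pi)+1\ge 1>-1$. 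This contradicts $f\in\F2w$, yielding $\dim\F2w\le N_0=\lceil M/(4\pi)\rceil$. The main technical point is the upper bound on $M_\psi(r)$ in terms of $M$; once that is established, Jensen's formula for $\log|f|$ combined with the convexity of $e^x$ forces the Fock norm to diverge at infinity.
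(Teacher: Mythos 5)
Your argument is correct in substance but takes a genuinely different route from the paper. The paper first replaces $\psi$ by a modified logarithmic potential $G$ of $\mu_\psi$ (invoking Lemma~\ref{lem7} to see that $\dim\mathcal F^2_G=\dim\F2w$), establishes the \emph{pointwise} bound $G(z)\le \frac{\mu_\psi(\compl)}{2\pi}\log(1+|z|)+O(1)$, and then applies a Liouville-type theorem to conclude that every element of $\mathcal F^2_G$ is a polynomial of controlled degree. You work with $\psi$ directly and need only the bound on the \emph{circular means} $M_\psi(r)\le\frac{M}{2\pi}\log r+O(1)$, which follows from the classical identity $rM_\psi'(r)=\mu_\psi(\overline{D(0,r)})/(2\pi)$; combining Jensen's formula for $\log|f|$ with the arithmetic--geometric (convexity) inequality then forces divergence of the norm for any $f$ vanishing to high order at the origin. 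This is more self-contained: it avoids both the reduction lemma and the Liouville step, at the price of an explicit vanishing-order/linear-algebra argument. Both proofs ultimately rest on the same potential-theoretic fact about the growth of $\psi$.

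One caveat on constants: the paper defines $\lceil x\rceil$ nonstandardly as the \emph{maximal integer strictly smaller than} $x$, so the bound to be proved is one less than the usual ceiling. Your justification ``$N\ge N_0\ge M/(4\pi)$, hence the exponent is $\ge 1$'' presumes the usual convention and, as written, only yields $\dim\F2w\le\lceil M/(4\pi)\rceil_{\mathrm{usual}}$. The fix is already contained in your computation: with the paper's convention one only has $N_0\ge M/(4\pi)-1$, so the exponent $2N-M/(2\pi)+1$ is merely $\ge -1$; but $\int_\rho^\infty r^{-1}\,dr$ still diverges, so the contradiction — and hence the sharper bound of the lemma — goes through. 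You should restate that step accordingly.
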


Here and later on, given a real number $x$, $\lceil x \rceil$ is the maximal integer smaller than $x$. 

\begin{proof} Set $\mu=\mu_\psi$ and consider 
a modified logarithmic potential $G$ of the measure $\mu$: 
\begin{multline*}
G(z)=\frac{1}{2\pi}\int_{D(0,2)}\log\abs{z-w}\,d\mu(w)+\frac{1}{2\pi}\int_{\compl\setminus D(0,2)}\log\Abs{\dfrac{z-w}{w}}\,d\mu(w)\\
=G_1(z)+G_2(z).
\end{multline*}
Here and later on, $D(z,r)=\{w\in\mathbb C:|w-z|<r\}$. 
Since $\Delta G =\mu=\Delta \psi$, by Lemma~\ref{lem7} we have $\dim\F2w=\dim \mathcal F^2_{G}$. 

Next,
\begin{multline}
\Bigl| G_1(z)-\frac{\mu(D(0,2))}{2\pi} \log|z|\Bigr|\le \frac{1}{2\pi}\int_{D(0,2)} \log \Bigl |1-\frac wz\Bigr| \,d\mu(w)
\\ \le  \frac{C}{|z|},\qquad |z|\ge 4,
\label{star}
\end{multline}
and 
\begin{multline*}
G_2(z)-\frac{\mu(\compl\setminus D(0,2))}{2\pi} \log|z|\\=\frac{1}{2\pi}\int_{\compl\setminus D(0,2)}\log \Bigl| \frac 1z-\frac 1w\Bigr| \,d\mu(w)\le 0,\qquad |z|\ge 4.
\end{multline*}
Thus,
$$
G(z)\le \frac{\mu(\compl)}{2\pi} \log(1+|z|)+\frac{C}{1+|z|},\qquad z\in\compl.
$$

Now, given an entire function $f$, we have 
$$
f\in\F2w\implies \int_{\compl}|f(z)|^2 (1+|z|)^{-\mu(\compl)/(2\pi)}\,dv(z)<\infty.
$$
By a Liouville type theorem, $f$ is a polynomial of degree $N$ such that 
$$
\int_1^\infty r^{2N}r^{-\mu(\compl)/(2\pi)}\,rdr<\infty.
$$
Therefore, $N<-1+\mu(\compl)/(4\pi)$. Thus, $\dim\F2w\le \Bigl\lceil \frac{\mu(\compl)}{4\pi} \Bigr\rceil$.
\end{proof} 

\begin{lemma}\label{lem2.2}
Let $\psi$ be a subharmonic function and suppose that $\mu_\psi\in\mathcal M^c$.
Then
$$
\dim\F2w\ge \Bigl\lceil \frac{\mu_\psi(\compl)}{4\pi} \Bigr\rceil.
$$
\end{lemma}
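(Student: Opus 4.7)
If $\mu_\psi(\compl)=\infty$, Lemma~\ref{rs} taken contrapositively already gives $\dim\F2w=\infty$ and there is nothing to prove, so I would assume $M:=\mu_\psi(\compl)<\infty$ and set $N=\lceil M/(4\pi)\rceil$, noting that by the paper's convention this means $4\pi N<M$. By Lemma~\ref{lem7} one may replace $\psi$ by the modified logarithmic potential $G$ constructed in the proof of Lemma~\ref{lem2.1} without changing the dimension. The plan is to show that the monomials $1,z,\ldots,z^{N-1}$ all belong to $\mathcal F^2_G$; as they are linearly independent this yields $\dim\mathcal F^2_G\ge N$. Fix $m\in\{0,1,\ldots,N-1\}$. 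Local integrability of $|z|^{2m}e^{-G(z)}$ is immediate from the earlier observation that $\mu_\psi\in\mathcal M^c$ implies $e^{-G}\in L^1_{\mathrm{loc}}(v)$, so only the tail integral at infinity requires work.

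The pointwise lower bound $G(z)\ge\frac{M}{2\pi}\log|z|-O(1)$ does not hold in general because $G$ dips logarithmically near the support of $\mu_\psi$; the main idea is to separate the troublesome mass at infinity. Choose $R_0>4$ large enough that $\nu(\compl)<M-4\pi N$, where $\nu:=\mu_\psi|_{\compl\setminus D(0,R_0/2)}$ (possible since $\mu_\psi$ is finite). Write $\mu_\psi=\mu'+\nu$ with $\mu'$ compactly supported of mass $M_1:=M-\nu(\compl)>4\pi N$, and decompose $G=G'+G''$ correspondingly. For $|z|>R_0$ we have $|z-w|\ge|z|/2$ uniformly for $w\in\operatorname{supp}\mu'$, yielding the pointwise estimate
$$
G'(z)\ge\frac{M_1}{2\pi}\log|z|-C_1,\qquad|z|>R_0,
$$
for a constant $C_1=C_1(\mu_\psi,R_0)$. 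The task then reduces to bounding
$$
\int_{|z|>R_0}|z|^{2m-M_1/(2\pi)}e^{-G''(z)}\,dv(z).
$$

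To handle this I would apply Jensen's inequality. Set $\beta=\nu(\compl)/(2\pi)$; by construction $\beta<2$. Jensen, applied to the convex function $t\mapsto e^t$ with respect to the probability measure $\nu/\nu(\compl)$, gives
$$
e^{-G''(z)}\le\frac{1}{\nu(\compl)}\int_\compl|1-z/w|^{-\beta}\,d\nu(w),
$$
and after swapping integrals by Fubini the tail integral is controlled by $\frac{1}{\nu(\compl)}\int J_m(w)\,d\nu(w)$, where
$$
J_m(w)=|w|^\beta\int_{|z|>R_0}|z|^{2m-M_1/(2\pi)}|z-w|^{-\beta}\,dv(z).
$$
Splitting the $z$-integration into the three regions $\{|z|\ge 2|w|\}$, $\{R_0<|z|<2|w|,\,|z-w|<|w|/2\}$, and $\{R_0<|z|<2|w|,\,|z-w|\ge|w|/2\}$ and evaluating in polar coordinates, one checks that $J_m(w)$ is uniformly bounded for $w\in\operatorname{supp}\nu$. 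The two strict inequalities that make every exponent come out right are $\beta<2$ (local integrability of $|z-w|^{-\beta}$ near $z=w$) and $2m+2<M_1/(2\pi)$ (radial integrability at infinity), both ensured by our choice of $R_0$ via $M>4\pi N$. Integrating the uniform bound on $J_m$ against the finite measure $\nu$ then concludes. The most delicate point is this final case analysis for $J_m(w)$, where one must track carefully that the identity $M_1/(2\pi)+\beta=M/(2\pi)$ combined with $M>4\pi N$ forces every exponent arising from the polar integration to be strictly negative; this is what dictates the strict inequality $\nu(\compl)<M-4\pi N$ in the choice of $R_0$ rather than just $\nu(\compl)<4\pi$.
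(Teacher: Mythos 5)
Your argument is correct, and it reaches the conclusion by a genuinely different route at the one place where the proof is delicate. Both you and the paper reduce to a modified logarithmic potential via Lemma~\ref{lem7}, isolate the (small) mass living far from the origin, and must then show that the logarithmic ``dips'' of the potential of that residual mass do not destroy integrability of $|z|^{2m}e^{-G(z)}$ at infinity. The paper handles this with Hayman's Cartan-type estimate (Lemma~\ref{hay}): it bounds the area of the exceptional set where the potential $U_3$ of the residual mass exceeds $C_1+\delta k$ on each dyadic annulus and sums the resulting geometric series, which requires the residual mass to be $<1/2$ (so that the exceptional areas decay fast enough). You instead apply Jensen's inequality to $e^{-G''}$ with respect to the normalized residual measure and swap the order of integration, reducing everything to the elementary uniform bound on $J_m(w)$; here the operative smallness condition is $\beta=\nu(\compl)/(2\pi)<2$, which your choice $\nu(\compl)<M-4\pi N\le 4\pi$ guarantees (the last inequality because $N$ is, in the paper's convention, the largest integer strictly below $M/(4\pi)$). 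Your exponent bookkeeping is right: $\gamma=M_1/(2\pi)-2m>2$ in all three regions and $\beta<2$ near the singularity, so $J_m$ is indeed uniformly bounded on $\{|w|\ge R_0/2\}$. The trade-off is that your Jensen--Fubini argument is self-contained and more elementary (it is essentially the classical proof that $e^{-u}$ is integrable when $u$ is a potential of mass below the critical threshold), while the Hayman-lemma route used in the paper yields the sharper pointwise/measure-theoretic information that the potential is large only on a set of controlled area, which is more robust if one wanted, say, $L^p$ variants. Your treatment of the infinite-mass case via the contrapositive of Lemma~\ref{rs}, and of local integrability via $\mu_\psi\in\mathcal M^c$, matches the paper.
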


\begin{proof} 
Set $\mu=\mu_\psi$ and choose $\varepsilon>0$, $R>1$ such that 
$$
\frac{\mu(D(0,R))}{4\pi}>  \Bigl\lceil \frac{\mu (\compl)}{4\pi} \Bigr\rceil +\frac{\varepsilon}{2}.
$$  
Next, increasing $R$, we can guarantee that 
$$
\mu(D(0,R)) >\mu(\compl)-\frac12. 
$$  

Consider 
a modified logarithmic potential $U$ of measure $\mu$: 
\begin{multline*}
U(z)=\frac{1}{2\pi}\int_{D(0,R)}\log\abs{z-w}\,d\mu(w)+\frac{1}{2\pi}\int_{\compl\setminus D(0,R)}\log\Abs{\dfrac{z-w}{w}}\,d\mu(w)\\
=U_1(z)+U_2(z).
\end{multline*}
Since $\Delta U =\mu=\Delta \psi$, by Lemma~\ref{lem7} we have $\dim\F2w=\dim \mathcal F^2_{U}$. 
Arguing as in \eqref{star}, we get 
$$
U_1(z)\ge \frac{\mu(D(0,R))}{2\pi} \log|z| - \frac{C}{|z|},\qquad |z|\ge 2R.
$$
Next, let $|z|\ge 2R$. Then
\begin{multline*}
U_2(z)=\frac{1}{2\pi}\int_{\compl\setminus (D(0,R)\cup D(z,|z|/2))}\log\Abs{\dfrac{z-w}{w}}\,d\mu(w)\\+\frac{1}{2\pi}\int_{D(z,|z|/2)}\log\Abs{\dfrac{z-w}{w}}\,d\mu(w)\\ \ge C-\frac{1}{2\pi}\int_{D(z,|z|/2)}\log\Abs{\dfrac{z/2}{z-w}}\,d\mu(w)=C- U_3(z).
\end{multline*}

Now, we apply a result by Hayman \cite[Lemma~4]{Ha}. The following notation is used there.
Let $\nu$ be a finite positive measure. Given $z\in\mathbb C$, $h>0$, set $n(z,h)=\nu(D(z,h))$, $N(z,h)=\int_{D(z,h)}\log\Bigl|\frac{h}{w-z}\Bigr|\,d\nu(w)$.

\begin{lemma} Let $z_0\in\mathbb C$, $0<d<h/2$. There exists a set $S$ of area at most $\pi d^2$ such that 
$$
N(z,h/2)\le n(z_0,h)\log\frac{16h}{d},\qquad z\in D(z_0,h/2)\setminus S.
$$
\label{hay}
\end{lemma}

Given $m\ge 1$, denote $A_m=\{z\in\compl: 2^m R\le |z|<2^{m+1}R\}$. 
Fix $m\ge 1$ and $k\ge 1$ and apply Lemma~\ref{hay} with $\nu=\mathbf1_{\compl\setminus D(0,R)} \mu$, $2^m R\le |z_0|<2^{m+1}R$, $h=2^{m-1}R$, 
$n(z_0,h)\le 1/2$, and $d=2^{m-k-1}R$ to get for some $C,C_1>0$, $\delta\in(0,1)$:
$$
m_2\bigl\{z\in A_m: U_3(z)> C_1+\delta k\bigr\}\le C\cdot 2^{2m}R^22^{-2k},\quad k\ge 1.
$$
Hence,  
\begin{multline*}
\int_{\compl}(1+|z|)^{-2-\varepsilon}e^{U_3(z)}\,dv(z) \le C+C\sum_{m\ge 1}\sum_{k\ge 1}2^{-(2+\varepsilon)m}e^{\delta k}\\
\times m_2\bigl\{z\in A_m:C_1+\delta k\le U_3(z)< C_1+\delta (k+1)\bigr\}
\\
\le 
C+C\sum_{m\ge 1}\sum_{k\ge 1}2^{-(2+\varepsilon)m}e^{\delta k}2^{2m}R^22^{-2k}<\infty.
\end{multline*}

Next, for every $0\le N\le \Bigl\lceil \frac{\mu(\compl)}{4\pi} \Bigr\rceil-1$ we have 
\begin{multline*}
\int_{\compl}|z|^{2N}e^{-U(z)}\,dv(z)\le 
C\int_{\compl}|z|^{2N}(1+|z|)^{-\mu(D(0,R))/(2\pi)}e^{U_3(z)}\,dv(z)\\ \le 
C\int_{\compl}(1+|z|)^{-2-\varepsilon}e^{U_3(z)}\,dv(z)<\infty
\end{multline*}

Here we use that $\mu_\psi\in\mathcal M^c$ and, hence, $e^{-U}$ is locally integrable.

Finally, we have
$$
\dim\F2w\ge \Bigl\lceil \frac{\mu(\compl)}{4\pi} \Bigr\rceil.
$$
\end{proof} 
 
Summing up Lemmata~\ref{lem7}, \ref{rs}, \ref{lem2.1}, and \ref{lem2.2}, we obtain the following result, extending and slightly correcting \cite[Theorem 3.2]{RoShi}.

\begin{theorem}\label{thm2.2}
Let $\psi$ be a subharmonic function on the complex plane. 
Then the Fock space $\F2w$ is finite-dimensional if and only if 
\begin{equation}
(\mu_\psi)^c(\compl)<\infty.
\label{cond}
\end{equation}
If $\psi$ is finite on $\compl$, then we can write condition \eqref{cond} as  
$\mu_\psi(\compl)<\infty$. 
Finally, if $(\mu_\psi)^c(\compl)<\infty$, then
$$
\dim\F2w= \Bigl\lceil \frac{(\mu_\psi)^c(\compl)}{4\pi} \Bigr\rceil.
$$
\end{theorem}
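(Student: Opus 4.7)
The plan is to glue together the four lemmas that precede the theorem, after a preliminary normalisation of $\psi$. The first step is to replace $\psi$ by an auxiliary subharmonic function $\psi_1$ whose Riesz measure is exactly $(\mu_\psi)^c$ --- for instance, one may take $\psi_1$ to be the modified logarithmic potential of $(\mu_\psi)^c$ of the type constructed in the proof of Lemma~\ref{lem2.1}. Since $\mu_{\psi_1}=(\mu_\psi)^c$ carries no atoms of mass $\ge 4\pi$ by definition of the $(\cdot)^c$ decomposition, we have $\mu_{\psi_1}\in\mathcal M^c$ and $(\mu_{\psi_1})^c=\mu_{\psi_1}=(\mu_\psi)^c$, so Lemma~\ref{lem7} applies and gives $\dim\F2w=\dim\mathcal F^2_{\psi_1}$. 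It therefore suffices to compute the dimension under the standing assumption that the Riesz measure already lies in $\mathcal M^c$, with total mass equal to $(\mu_\psi)^c(\compl)$.

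Next, I would split into two cases. If $(\mu_\psi)^c(\compl)=\infty$, then $\mu_{\psi_1}\in\mathcal M^c$ has infinite mass, and the contrapositive of Lemma~\ref{rs} gives $\dim\mathcal F^2_{\psi_1}=\infty$, hence $\dim\F2w=\infty$. If on the other hand $(\mu_\psi)^c(\compl)<\infty$, Lemma~\ref{lem2.1} applied to $\psi_1$ yields the upper bound
$$
\dim\mathcal F^2_{\psi_1}\le \Bigl\lceil \frac{(\mu_\psi)^c(\compl)}{4\pi}\Bigr\rceil,
$$
while Lemma~\ref{lem2.2}, applicable precisely because $\mu_{\psi_1}\in\mathcal M^c$, supplies the matching lower bound. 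The two estimates together give the equality claimed in the theorem and, in particular, establish the equivalence ``finite dimension $\iff$ $(\mu_\psi)^c(\compl)<\infty$''.

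For the parenthetical remark about the case when $\psi$ is finite on $\compl$, I would note that then $\psi\in L^1_{\mathrm{loc}}$, so the distributional Laplacian $\mu_\psi=\Delta\psi$ carries no point masses; hence $\mu_\psi\in\mathcal M^c$, $(\mu_\psi)^c=\mu_\psi$, and condition \eqref{cond} reduces to $\mu_\psi(\compl)<\infty$, exactly as stated.

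The only genuinely delicate step in this scheme is the reduction in the first paragraph: one must verify that a subharmonic $\psi_1$ with $\mu_{\psi_1}=(\mu_\psi)^c$ can actually be produced, even when $(\mu_\psi)^c$ is only $\sigma$-finite. This is the main obstacle, but it is handled by the same modified logarithmic potential construction already used in the proofs of Lemmas~\ref{lem2.1} and~\ref{lem2.2}, which is manifestly subharmonic and has Laplacian equal to the prescribed measure. Once $\psi_1$ is in hand the rest of the argument is a bookkeeping assembly of the earlier lemmas.
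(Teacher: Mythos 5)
Your proposal is correct and is essentially the paper's own proof, which consists of nothing more than assembling Lemmas~\ref{lem7}, \ref{rs}, \ref{lem2.1} and \ref{lem2.2} after exactly the normalisation you describe, i.e.\ passing to a subharmonic $\psi_1$ with $\mu_{\psi_1}=(\mu_\psi)^c\in\mathcal M^c$ and invoking Lemma~\ref{lem7}. Two small caveats: when $(\mu_\psi)^c$ has infinite mass and grows rapidly at infinity the genus-zero modified logarithmic potential you cite need not converge, so the existence of $\psi_1$ should instead be taken from the general Weierstrass--Riesz construction of a subharmonic function with an arbitrary prescribed locally finite Riesz measure (a fact the paper also uses implicitly); and the reason a finite $\psi$ forces $\mu_\psi\in\mathcal M^c$ is not local integrability but that an atom $a\delta_{x_0}$ of $\Delta\psi$ would give $\psi(z)\le\frac{a}{2\pi}\log|z-x_0|+O(1)$ near $x_0$ and hence $\psi(x_0)=-\infty$.
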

\bigskip
 
\begin{remark} It is an interesting open question to characterize non subharmonic functions $\psi$ such that the space $\F2w$ is of finite dimension. 
For some results in this direction 
and some physical interpretations see \cite{RoShi1}. 
\end{remark}

\section{The case of \texorpdfstring{$\mathbb C^n$, $n>1$}{Cn, n>1}}

Let $\compl^n$ denote the $n$-dimensional complex Euclidean space. Given $z=(z_1,z_2, \ldots,z_n)\in \compl^n$, we set
$$
\abs{z}=\sqrt{\abs{z_1}^2+\cdots+\abs{z_n}^2}.
$$
Denote $\Bn(z,r)=\{w\in\compl^n:|w-z|<r\}$. Then $\Bn=\Bn(0,1)$ is the unit ball and $\Sn=\partial \Bn$ is the unit sphere in $\compl^n$. Let $d\sigma$ be the normalized surface measure on $\Sn$.

\begin{theorem}\label{dl1}
Let $\psi: \compl^n \to \R$ be a $\mathcal C^2$ smooth function. Given $M>0$, consider $\psi_M(z)=M\log(\abs z^2)$. 
Suppose that for every $M> 0$, the function $\psi-\psi_M$ is plurisubharmonic outside a compact subset of $\compl^n$.
Then $\dim\F2w=\infty$.
\end{theorem}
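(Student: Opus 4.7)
The plan is to show $\dim\F2w = \infty$ by constructing, for each $N\ge 1$, peak-type functions $f_1, \ldots, f_N \in \F2w$ at $N$ prescribed points $p_1, \ldots, p_N \in \compl^n$, satisfying $f_k(p_j) = \delta_{jk}$ (so that they are automatically linearly independent), via H\"ormander's $L^2$ estimate for $\dbar$. First I reduce to the case where $\psi$ is plurisubharmonic on all of $\compl^n$: applying the hypothesis with a fixed $M_0>0$ expresses $\psi$ outside $K_{M_0}$ as $(\psi - M_0\log|z|^2) + M_0\log|z|^2$, a sum of two psh functions, so $\psi$ is psh at infinity; modifying $\psi$ on a compact neighborhood of $K_{M_0}$ to make it globally smooth and psh changes $e^{-\psi}$ only by a bounded factor and so preserves $\dim\F2w$. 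The modified $\psi$ still satisfies $\idd\psi \ge M\,\idd\log|z|^2$ outside a compact set $K_M$ for every $M>0$.

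Given $N$, pick $M = M(N)$ large and choose distinct points $p_1, \ldots, p_N \in \compl^n\setminus K_M$ far from the origin, together with smooth cutoffs $\chi_k \in C_c^\infty(\Bn(p_k, r))$ satisfying $\chi_k(p_k) = 1$ and pairwise disjoint supports. Consider the singular plurisubharmonic weight
\[
\varphi(z) = \psi(z) + 2n \sum_{j=1}^N \log|z-p_j|.
\]
The goal is to invoke H\"ormander's theorem to solve $\dbar u_k = \dbar\chi_k$ with the twin conclusions $\int_{\compl^n} |u_k|^2 e^{-\psi}\, dv < \infty$ and local integrability of $|u_k|^2 e^{-\varphi}$ near each $p_j$. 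The second conclusion, combined with smoothness of $u_k$ (by elliptic regularity of $\dbar$, since $\dbar u_k = \dbar\chi_k$ is smooth), forces $u_k(p_j) = 0$ because $|z-p_j|^{-2n}$ is not locally integrable at $p_j$. Then $f_k := \chi_k - u_k$ is entire, lies in $\F2w$, and satisfies $f_k(p_j) = \delta_{jk}$; since $N$ is arbitrary, $\dim\F2w = \infty$.

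The main obstacle is producing the $L^2(e^{-\psi})$ bound for $u_k$ \emph{together with} the local integrability that forces vanishing at the $p_j$. A direct H\"ormander estimate with the singular weight $\varphi$ only yields $\int|u_k|^2 e^{-\varphi}\, dv < \infty$ globally, which is strictly weaker than the $\psi$-weighted bound at infinity (since $\varphi \ge \psi$ for large $|z|$). To close this gap one must exploit the full strength of the hypothesis, namely the freedom to take $M$ arbitrarily large. Concretely, I would invoke a twisted (Demailly/Berndtsson-type) H\"ormander estimate with respect to the K\"ahler metric $\omega_{FS} = \idd\log(1+|z|^2)$, using the positivity $\idd\psi \ge M\omega_{FS}$ outside $K_M$: taking $M$ large enough to absorb both the contributions of the $N$ singular log terms in $\varphi$ and the Fubini--Study versus Euclidean volume-form discrepancy yields the required bound. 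The radial degeneracy of $\omega_{FS}$ causes no trouble here since $\dbar\chi_k$ is supported in small balls around the $p_k$ (far from the origin) on which $\omega_{FS}$ and the Euclidean K\"ahler form are uniformly comparable.
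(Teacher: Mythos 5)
Your architecture (reduce to a globally plurisubharmonic weight, then apply H\"ormander's $L^2$ method) is in the right family, and you correctly identify the crux: the naive estimate with the singular weight $\varphi=\psi+2n\sum_j\log|z-p_j|$ only controls $\int|u_k|^2e^{-\varphi}\,dv$, which is useless at infinity. But the fix you propose does not close the gap, for two reasons. First, the inequality $\idd\psi\ge M\,\omega_{FS}$ outside $K_M$ is not what the hypothesis gives. The hypothesis gives $\idd\psi\ge M\,\idd\log|z|^2$, and $\idd\log|z|^2$ is degenerate: its eigenvalues are $|z|^{-2}$ with multiplicity $n-1$ and $0$ with multiplicity $1$ (the radial direction), whereas $\omega_{FS}=\idd\log(1+|z|^2)$ has strictly positive radial eigenvalue $(1+|z|^2)^{-2}$. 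One can produce radial $\psi=\varphi(|z|^2)$ with $(t\varphi'(t))'=0$ on an unbounded set of radii while $t\varphi'(t)\to\infty$; such a $\psi$ satisfies the hypothesis for every $M>0$ yet satisfies $\idd\psi\ge\epsilon\,\omega_{FS}$ outside no compact set, for any $\epsilon>0$. Second, and more fundamentally, even granting arbitrarily strong curvature positivity, any twisted H\"ormander/Donnelly--Fefferman estimate still returns a conclusion weighted by $e^{-\varphi}$ (against some volume form dominated by $dv$). Since $\varphi-\psi=2n\sum_j\log|z-p_j|\to+\infty$, no curvature hypothesis converts such a bound into $\int|u_k|^2e^{-\psi}\,dv<\infty$: the decay at infinity must be built into the weight itself by subtracting $M\log|z|^2$, and it is exactly there that the hypothesis (plurisubharmonicity of $\psi-\psi_M$) has to be used to keep the weight plurisubharmonic. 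As written, your proposal never inserts this term, so the step that is supposed to produce the $e^{-\psi}$-bound at infinity is missing.

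For comparison, the paper's proof does precisely this: it extends $\psi-\psi_M$ from the complement of $\Bn(0,r_M)$ to a global continuous plurisubharmonic function $\widetilde\psi_M$ by solving the Dirichlet problem for the homogeneous complex Monge--Amp\`ere equation (Bedford--Taylor) --- which is also the honest version of your ``modify $\psi$ on a compact set'' reduction, asserted but not constructed in your write-up --- and then applies the H\"ormander--Bombieri existence theorem for merely plurisubharmonic weights, obtaining $f\not\equiv0$ with $\int|f|^2(1+|z|^2)^{-3n}e^{-\widetilde\psi_M}\,dv<\infty$. Passing back from $\widetilde\psi_M$ to $\psi$ restores a factor $|z|^{2M}$ at infinity, which absorbs both the polynomial loss $(1+|z|^2)^{-3n}$ and all monomial multipliers $z^\alpha$ with $|\alpha|\le M-\tfrac32 n$; letting $M\to\infty$ gives $\dim\F2w=\infty$. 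Your peak-function scheme could be repaired along the same lines (run H\"ormander's estimate for plurisubharmonic weights, with its built-in $(1+|z|^2)^{-2}$ loss, on the weight $\widetilde\psi_M+2n\sum_j\log|z-p_j|$ with $M$ large compared to $nN$), but the version you wrote rests on a false curvature inequality and omits the essential modification of the weight.
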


\begin{proof} We use the fundamental result of Bedford--Taylor \cite{BT} on the solutions of the Dirichlet problem for the complex Monge--Amp\`ere equation. Given $M>0$, choose $r_M>1$ 
such that $\psi-\psi_M$ is plurisubharmonic on $\compl^n\setminus \overline{\Bn(0,r_M)}$. Solving the Dirichlet problem for the complex Monge--Amp\`ere equation on $\Bn(0,r_M)$ with 
the boundary conditions $(\psi-\psi_M)|_{\partial \Bn(0,r_M)}$, we obtain a function $u$. Set 
$$
\widetilde \psi_M(z)=\begin{cases}
(\psi-\psi_M)(z),\qquad z\in \compl^n\setminus \Bn(0,r_M),\\
u(z),\qquad z\in \Bn(0,r_M).
\end{cases}
$$
Then $\widetilde \psi_M$ is a continuous plurisubharmonic function on $\compl^n$ (see also \cite[Section 7]{DE}).

Now, by the H\"ormander theorem (\cite[Theorem 4.4.4]{HO}, see also \cite[Section IV]{BO}), 
there exists an entire function $f\not\equiv0$ such that  
$$
\int_{\compl^n}\abs{f(z)}^2(1+\abs z^2)^{-3n}e^{-\widetilde \psi_M(z)}\,dv(z)<\infty.
$$
Hence, for every $0\le k\le M-\frac32n$, we have 
\begin{multline*}
\int_{\compl^n}\abs{f(z)}^2|z|^{2k}e^{-\psi(z)}\,dv(z)\le 
C+\int_{\compl^n\setminus \Bn(0,r_M)}\abs{f(z)}^2|z|^{2k}e^{-\psi(z)}\,dv(z)\\=
C+\int_{\compl^n\setminus \Bn(0,r_M)}\abs{f(z)}^2|z|^{2k}e^{-\psi_M(z)}e^{-(\psi(z)-\psi_M(z))}\,dv(z)\\ \le 
C+\int_{\compl^n\setminus \Bn(0,r_M)}\abs{f(z)}^2|z|^{-3n}e^{-\widetilde \psi_M(z)}\,dv(z)<\infty.
\end{multline*}
Since $M$ is arbitrary, we have $\dim\F2w=\infty$.
\end{proof}

\begin{remark}
Theorem~A is an immediate corollary of Theorem~\ref{dl1}. \label{r32}

Indeed, an easy computation shows that if $\psi(z)=\varphi(\abs z^2)$, $\varphi \in C^2((0,+\infty))$, then
$$
\dfrac{\d^2\psi}{\d z_j\d\bar{z_k}}(z)=\varphi''(\abs z^2)\bar{z_j}z_k+\varphi'(\abs z^2)\delta_{jk},
$$
where $\delta_{jk}$ is the Kronecker delta symbol. This implies that 
$$
\idd \psi(z)=\varphi'(\abs z^2)I+\varphi''(\abs z^2)z^*z,
$$
where $z^*=\begin{pmatrix}
\bar z_1\\
\dots\\
\bar z_n
\end{pmatrix}$, $z^*z=\bigl[\bar z_jz_k\bigr]_{j,k=1}^n$.
Note also that the spectrum of the matrix $\idd \psi(z)$ is
\begin{equation}
\sigma(\idd \psi(z))=\set{\varphi'(\abs z^2),\varphi'(\abs z^2)+\abs z^2\varphi''(\abs z^2) }.
\label{dop1}
\end{equation}
The first eigenvalue has multiplicity $n-1$ and the second one has multiplicity $1$.

Furthermore, 
\begin{multline*}
L_\psi(z)=\idd \psi(z)=\idd (\psi-\psi_M)(z)+\frac{M}{|z|^2}I-\frac{M}{|z|^4}z^*z\\=
L_{\psi-\psi_M}(z)+\frac{M}{|z|^2}I-\frac{M}{|z|^4}z^*z.
\end{multline*}
Let $z\in \compl^n$ and let 
$V=\begin{pmatrix}
V_1\\
\dots\\
V_n
\end{pmatrix}$
be a normalized eigenvector corresponding to an eigenvalue $\nu$ of $L_{\psi-\psi_M}(z)$. By the hypothesis of Theorem~A, for $|z|>r_M$ we have $\lambda_0(z)|z|^2\ge M$, where 
$\lambda_0(z)$ is the smallest eigenvalue of $L_\psi(z)$. Thus, 
\begin{multline*}
\nu=\langle L_{\psi-\psi_M}(z)V,V\rangle=\langle L_\psi(z)V,V\rangle-\frac{M}{|z|^2}+\frac{M}{|z|^4}\langle z^*z V,V\rangle\\ \ge \lambda_0(z)-\frac{M}{|z|^2}+\frac{M}{|z|^4}|z V|^2\ge 0.
\end{multline*}
Therefore, $\psi-\psi_M$ is plurisubharmonic on $\compl^n\setminus \overline{\Bn(0,r_M)}$, and we are in the conditions of Theorem~\ref{dl1}. \qed
\end{remark}

Now we give an easy example when Theorem~\ref{dl1} applies while Theorem~A does not work.

\begin{example}
Set 
$$
\psi(z)=\varphi(\abs z^2)= \bigl(\log(1+\abs z^2)\bigr)^{3/2},\qquad z\in \compl^n.
$$ 
Then $\varphi(t)=\bigl(\log(1+t)\bigr)^{3/2}$, $t>0$.

Evidently, $\dim \F2w=\infty$. We will show that condition \eqref{2} fails for $\psi$ while the conditions of Theorem~\ref{dl1} are satisfied.

We have 
$$
\varphi'(t)=\frac32\frac1{1+t}\bigl(\log(1+t)\bigr)^{1/2},
$$
and 
$$
\varphi''(t)=-\frac32\frac{\bigl(\log(1+t)\bigr)^{1/2}}{(1+t)^2}+\frac{3}{4(1+t)^2\big(\log(1+t)\big)^{1/2}}.
$$
By \eqref{dop1}, the eigenvalues of the matrix $L_\psi(z)$ are
$$\lambda_1(z)=\dfrac{3\big(\log(1+\abs z^2)\big)^{1/2}}{2(1+\abs z^2)},
$$
and
\begin{multline*}
\lambda_2(z)
	=\dfrac{3\big(\log(1+\abs z^2)\big)^{1/2}}{2(1+\abs z^2)^2}+\dfrac{3\abs z^2}{4(1+\abs z^2)^2\big(\log(1+\abs z^2)\big)^{1/2}}\\
	=\dfrac{3}{4}\dfrac{2\log(1+\abs z^2)+\abs z^2}{(1+\abs z^2)^2\big(\log(1+\abs z^2)\big)^{1/2}}.
\end{multline*}

For $|z|\ge 2$, the smallest eigenvalue of the matrix $L_\psi(z)$ is $\lambda_2(z)$ and
 $$\lim_{\abs z\to \infty}\abs z^2\lambda_2(z)=0.$$
Thus, condition \eqref{2} does not hold.
 
 On the other hand, for $M>0$, the eigenvalues of matrix $L_{\psi-\psi_M}(z)$ are
 $$\alpha_1(z)=\lambda_1(z)-\dfrac{M}{\abs z^2},$$
 and
 $$\alpha_2(z)=\lambda_2(z).$$
 Since $\displaystyle\lim_{\abs z\to \infty}\abs z^2\lambda_1(z)=\infty$ and $\alpha_2(z)>0$, $z\not=0$, the conditions of Theorem~\ref{dl1} are satisfied. \qed
\end{example}

In the rest of the paper we 
show that in different situations the sufficient condition of Theorem~\ref{dl1} is not necessary for $\dim\F2w=\infty$.

\begin{example}
Set 
$$
\psi(z,w)=\abs z^2+2\log(1+\abs w^2), \qquad w, z\in \compl.
$$
It is clear that $\dim \F2w=\infty$. Let us verify that for $M>2$ the function $\psi-\psi_M$ is not plurisubharmonic 
at the points $(1,w)$, $w\in\compl$.

We start with some easy computations:
\begin{gather*}
\frac{\d \psi}{\d z}=\overline{z}, \quad \frac{\d^2 \psi}{\d z \d \overline z}=1, \quad \frac{\d^2 \psi}{\d z \d \overline w}= 0,\\
\frac{\d \psi}{\d w}=\frac{2\overline w}{1+\abs w^2}, \quad \frac{\d^2 \psi}{\d w \d \overline z}= 0, \quad \frac{\d^2 \psi}{\d w \d \overline w}= \frac{2}{(1+\abs w^2)^2}.
\end{gather*}
Now, given $M>0$, we have 
\begin{multline*}
L_{\psi-\psi_M}(z,w)\\=
   \begin{pmatrix}
     1&0\\ 
     0&\frac{2}{(1+\abs w^2)^2}\\ 
   \end{pmatrix}
    +
    \frac{M}{(\abs z^2+\abs w^2)^2}
    \begin{pmatrix}
     \abs z^2&\overline zw\\ 
     z\overline w&\abs w^2\\ 
   \end{pmatrix}
    -
    \frac{M}{\abs z^2+\abs w^2}I \\
  =\begin{pmatrix}
     1-\frac{M\abs w^2}{(\abs z^2+\abs w^2)^2}&\frac{M\overline zw}{(\abs z^2+\abs w^2)^2}\\ 
    \frac{M z\overline w}{(\abs z^2+\abs w^2)^2}&\frac{2}{(1+\abs w^2)^2}-\frac{M\abs z^2}{(\abs z^2+\abs w^2)^2}\\ 
   \end{pmatrix},
\end{multline*}
and, hence,
\begin{multline*}
\det(L_{\psi-\psi_M}(z,w))
	\\=\frac{2}{(1+\abs w^2)^2}-\frac{M\abs z^2}{(\abs z^2+\abs w^2)^2}-\frac{2M\abs w^2}{(1+\abs w^2)^2(\abs z^2+\abs w^2)^2}\\
	=\frac{2(\abs z^2+\abs w^2)^2-M(2\abs w^2+\abs z^2(1+\abs w^2)^2)}{(1+\abs w^2)^2(\abs z^2+\abs w^2)^2}
	<0
\end{multline*}
for $M>2$, $z=1$ and arbitrary $w$. Therefore, the conditions of Theorem~\ref{dl1} do not hold. \qed
\end{example}

\subsection{Weight functions $\psi$ of special form}
In this subsection we evaluate the dimension of $\F2w$ and the applicability of our criterion in Theorem~\ref{dl1}, for some concrete weight functions $\psi$ 
and for $\psi$ in some special classes.


\begin{example}\label{ex4}
Let $k\ge 3$. Set $\psi(z)= \abs{z_1^k+z_2^k}^2$, $z=(z_1,z_2)\in \compl^2$. Given $M>0$, we have  
$$
L_{\psi-\psi_M}(z)=
		\begin{pmatrix}
		k^2\abs{z_1}^{2(k-1)}-\frac{M}{\abs{z}^4}\abs{z_2}^2 & k^2(z_1\overline{z_2})^{k-1}+ \frac{M}{\abs{z}^4} \overline{z_1}z_2\\ 
		k^2(\overline{z_1}{z_2})^{k-1}+ \frac{M}{\abs{z}^4} z_1\overline{z_2} & k^2\abs{z_2}^{2(k-1)}-\frac{M}{\abs{z}^4}\abs{z_1}^2
		\end{pmatrix},
$$
and, hence,
\begin{multline*}
\det(L_{\psi-\psi_M}(z))\\= 
	\left(k^2\abs{z_1}^{2(k-1)}-\frac{M}{\abs{z}^4}\abs{z_2}^2\right)\left(k^2\abs{z_2}^{2(k-1)}-\frac{M}{\abs{z}^4}\abs{z_1}^2\right)\\
	 \qquad - \left(k^2(z_1\overline{z_2})^{k-1}+ \frac{M}{\abs{z}^4} \overline{z_1}z_2\right) \left(k^2(\overline{z_1}{z_2})^{k-1}+ \frac{M}{\abs{z}^4} z_1\overline{z_2}\right)\\
	=-\frac{k^2M}{\abs z^4}\left(\abs{z_1}^{2k}+\abs{z_2}^{2k}+(z_1\overline{z_2})^k+(\overline{z_1}z_2)^k\right)\\
	=-\frac{k^2M}{\abs z^4}\;\abs{z_1^k+z_2^k}^2<0
\end{multline*}
when $z_1^k+z_2^k\not=0$. Thus, for $M>0$, the function $\psi-\psi_M$ is not plurisubharmonic outside a compact subset of $\compl^2$.

Next we are going to verify that $\dim \F2w=\infty$. 

We have 
\begin{multline*}
X:=\int_{\compl^2}e^{-\abs{z_1^k+z_2^k}^2}\,dv(z)
	\asymp\int_0^\infty\int_{\mathbb S_2}r^3e^{-r^{2k}\abs{\zeta_1^k+\zeta_2^k}^2}\,d\sigma(\zeta_1,\zeta_2)\,dr\\
	\asymp \int_{\mathbb S_2}\abs{\zeta_1^k+\zeta_2^k}^{-4/k}\,d\sigma(\zeta_1,\zeta_2).
\end{multline*}
Given $\varepsilon>0$, we consider the set 
$$
T_\varepsilon=\set{(\zeta_1,\zeta_2)\in\mathbb S_2: \abs{\zeta_1^k+\zeta_2^k}<\varepsilon }.
$$
Given $(\zeta_1,\zeta_2)\in \mathbb S_2$ such that $|\zeta_1|\ge |\zeta_2|$, set $\zeta_1=\sqrt{\frac12+r}\cdot e^{i\theta}$ and $\zeta_2=\sqrt{\frac12-r}\cdot e^{i\varphi}$, $r\ge 0$. 
If $(\zeta_1,\zeta_2)\in T_\varepsilon$, then $\abs{\zeta_1}^2-\abs{\zeta_2}^2<C\varepsilon$ for some constant $C=C(k)>0$. Hence, $r\lesssim \varepsilon$. 
Next, since $\abs{\zeta_1^k+\zeta_2^k}<\varepsilon$, we obtain that $|e^{ik\theta}-e^{ik\varphi}|\lesssim \varepsilon$. As a result, we obtain that  
$$
\sigma(T_\varepsilon)\lesssim \varepsilon^2.
$$
Set
$$
U_s=\set{(\zeta_1,\zeta_2)\in\mathbb S_2:2^{-s}< \abs{\zeta_1^k+\zeta_2^k}\le 2^{-s+1}}.
$$

Then 
\begin{multline*}
X\asymp \sum_{s=0}^\infty \int_{U_s}\abs{\zeta_1^k+\zeta_2^k}^{-4/k}\,d\sigma(\zeta_1,\zeta_2)\\
	\lesssim \sum_{s=0}^\infty 2^{-2s}\,2^{4s/k} = \sum_{s=0}^\infty 2^{-2s(1-(2/k))}<\infty, 
\end{multline*}
since $k\ge 3$. Thus, $1\in \F2w$.

In the same way, for every $\alpha>0$ we get 
$$
\int_{\compl^2}e^{-\alpha\abs{z_1^k+z_2^k}^2}\,dv(z)<\infty.
$$
Consider the entire functions $f(z)=e^{\beta(z_1^k+z_2^k)^2}$, $0<\beta<\frac12$. Since
\begin{multline*}
\int_{\compl^2}\bigl|e^{\beta(z_1^k+z_2^k)^2}\bigr|^2e^{-\abs{z_1^k+z_2^k}^2}\,dv(z)
	=\int_{\compl^2}e^{2\beta\Real((z_1^k+z_2^k)^2)-\abs{z_1^k+z_2^k}^2}\,dv(z)\\
	\le \int_{\compl^2}e^{-(1-2\beta)\abs{z_1^k+z_2^k}^2}\,dv(z)<\infty,
\end{multline*}
we conclude that $\dim \F2w=\infty$.
\qed 
\end{example}

Interestingly, $\F2w=0$ if $k=2$. Indeed, let $\psi((z_1,z_2))=|z_1^2+z_2^2|^2$, $f\in \F2w$, $f(z_1,z_2)=(z_1^2+z_2^2)^sg(z_1,z_2)$ for some 
$s\ge 0$, where $g(z_1,z_2)$ is not a multiple of $z_1^2+z_2^2$. By the mean value property, for every $z_1\in\compl\setminus D(0,10)$ 
we have 
\begin{multline*}
|g(z_1,iz_1)|^2\\ \lesssim (1+|z_1|)^2\int_{D(iz_1,2/(1+|z_1|)\setminus D(iz_1,1/(1+|z_1|))}|g(z_1,z_2)|^2e^{-|z_1^2+z_2^2|^2}\,dv(z_2)\\ \lesssim 
(1+|z_1|)^2\int_{D(iz_1,2/(1+|z_1|)\setminus D(iz_1,1/(1+|z_1|))}|f(z_1,z_2)|^2e^{-|z_1^2+z_2^2|^2}\,dv(z_2).
\end{multline*}
Hence,
$$
\int_{\compl} |g(z_1,iz_1)|^2(1+|z_1|)^{-2}\,dv(z_1)\lesssim \|f\|^2_\psi, 
$$
and by a Liouville type theorem,
$g(z,iz)\equiv 0$. Analogously, $g(z,-iz)\equiv 0$. Set $h(z,w)=g(z-iw,z+iw)$. Then $h$ is an entire function and $h(0,w)=h(w,0)\equiv 0$. 
Hence, $h(z,w)=zwh_1(z,w)$ for another entire function $h_1$ and $g(z_1,z_2)=(z_1^2+z_2^2)g_1(z_1,z_2)$ for some entire function $g_1$. This contradiction shows that $\F2w=0$.

Extending the previous example to $\compl^n$ with $n\ge 3$ requires a bit more work.

\begin{example} \label{ex5} Let $n\ge 3$, $k\ge n+1$. 
Set 
$$
\psi(z)= \abs{z_1^k+ \cdots+ z_n^k}^2,\qquad z=(z_1,\ldots,z_n)\in \compl^n.
$$

Let us verify that for $M>0$, the function $\psi-\psi_M$ is not plurisubharmonic outside a compact subset of $\compl^n$.

We have 
\begin{multline*}
L_\psi(z)=k^2\begin{pmatrix}
\abs{z_1}^{2(k-1)} & (z_1\overline{z_2})^{k-1} & \ldots & (z_1\overline{z_n})^{k-1} \\ 
(\overline{z_1}z_2)^{k-1} & \abs{z_2}^{2(k-1)} & \ldots & (z_2\overline{z_n})^{k-1} \\ 
\vdots & \vdots & \ldots & \vdots \\
(\overline{z_1}z_n)^{k-1} & (\overline{z_2}z_n)^{k-1} &\ldots & \abs{z_n}^{2(k-1)}
\end{pmatrix}\\
	=k^2
		\begin{pmatrix}
		z_1^{k-1} \\ 
		z_2^{k-1} \\ 
		\vdots\\
		z_n^{k-1}
		\end{pmatrix} 
		\begin{pmatrix}
		\overline{z_1}^{k-1} & \overline{z_2}^{k-1} & \ldots &\overline{z_n}^{k-1}
		\end{pmatrix}.
\end{multline*}
Set
$$
A(z)=\frac{M}{\abs z^{4}}
	\begin{pmatrix}
		\overline{z_1} \\ 
		\overline{z_2} \\ 
		\vdots\\
		\overline{z_n}
	\end{pmatrix} 
	\begin{pmatrix}
	 	z_1 & z_2 &\ldots & z_n
	\end{pmatrix}. 
$$
Then 
$$
L_{\psi-\psi_M}(z)=L_\psi(z)+A(z)-\frac{M}{\abs z^2}I.
$$
The spectra of the matrices $L_\psi(z)$ and $A(z)$ are
\begin{gather*}
\sigma_{L_\psi(z)}=\set{k^2\bigl(\abs{z_1}^{2(k-1)}+\abs{z_2}^{2(k-1)}+\cdots +\abs{z_n}^{2(k-1)}\bigr),0},\\
\sigma_{A(z)}=\set{\frac{M}{\abs z^2},0}.
\end{gather*}
Let $V$ be the a unit vector in $\compl^n$ orthogonal to 
$\begin{pmatrix}
		z_1^{k-1} \\ 
		z_2^{k-1} \\ 
		\vdots\\
		z_n^{k-1}
		\end{pmatrix}$ and to $\begin{pmatrix}
		\overline{z_1} \\ 
		\overline{z_2} \\ 
		\vdots\\
		\overline{z_n}
	\end{pmatrix} $. 
Then
$$
\langle L_{\psi-\psi_M}(z) V,V\rangle=\langle L_\psi(z)V+A(z)V-\frac{M}{\abs z^2}V,V\rangle=-\frac{M}{\abs z^2}<0.
$$		
Thus, for $M>0$, the function $\psi-\psi_M$ is plurisubharmonic at no points of $\compl^n\setminus\{0\}$.

Finally, let us verify that $\dim \F2w=\infty$. Set
\begin{multline*}
X:=\int_{\compl^n}e^{-\abs{z_1^k+\ldots+z_n^k}^2}\,dv(z)\\
	\asymp \int_0^\infty\int_{\mathbb S_n}r^{2n-1}e^{-r^{2k}\abs{\zeta_1^k+\cdots+\zeta_n^k}^2}\,d\sigma(\zeta_1,\ldots,\zeta_n)\,dr\\
	\asymp \int_{\mathbb S_n}\abs{\zeta_1^k+\ldots+\zeta_n^k}^{-2n/k}\,d\sigma(\zeta_1,\ldots,\zeta_n).
\end{multline*}
Given $\varepsilon>0$, we consider the set 
$$
T_\varepsilon=\set{(\zeta_1,\ldots,\zeta_n)\in\mathbb S_n: \abs{\zeta_1^k+\ldots+\zeta_n^k}<\varepsilon }.
$$
Set 
$$
P(z)= \sum_{j=1}^n z_j^k, \qquad z=(z_1,\ldots,z_n)\in \compl^n.
$$
Then the function $f=\log|P|$ is plurisubharmonic. Following \cite{Kis}, we consider the Lelong number of $f$ at $a\in \compl^n$,
$$
\nu_f(a)=\lim _{r\to 0}\dfrac{\sup _{\abs z\le r}f(a+z)}{\log r}\in [0,\infty].
$$
If $f(a)\ne 0$, then $\nu_f(a)=0$. Otherwise, let $a= (a_1, \ldots, a_n)\ne 0$ and $f(a)=0$. Without loss of generality, we can assume that $a_1\ne 0$. If $0<r<\frac{\abs{a_1}}{2}$, then 
$$
f\bigl(a+(r,0,\ldots,0)\bigr)=\log\abs{(a_1+r)^k-a_1^k}= \log\abs{ka_1^{k-1}r+O(r^2)}, \quad r\to 0,
$$
and hence, $\nu_f(a)=1$. By Theorem 3.1 in \cite{Kis}, applied to $\Omega=2\Bn$, $K=\overline{\Bn}\setminus \frac12\Bn$, $1<\alpha<2$, we obtain
\begin{multline*}
v\bigl(\{z\in K: \abs{P(z)}\le e^{-u}\}\bigr)
	=v\bigl(\{z\in K: f(z)\le {-u}\}\bigr)\\
	\le C_{\alpha}e^{-\alpha u}, \qquad u\ge 0.
\end{multline*}
By homogeneity of $P$, 
$$
\sigma(T_\varepsilon)\le C\varepsilon^\alpha,\qquad \varepsilon>0,
$$
for some constant $C>0$.

Arguing as in Example~\ref{ex4}, we obtain first that $1\in \F2w$ and then that $\dim \F2w=\infty$ for $k\ge n+1$. \qed
\end{example}

At the end of the paper, we consider two special classes of weight functions $\psi$: radial weight functions and the functions of the form $\psi(z_1,\ldots,z_n)=\sum_{j=1}^n
\psi_j(z_j)$.

Suppose that $\psi(z)=\varphi(|z|^2)$ is a radial plurisubharmonic function of class $C^2$. By the computations in Remark~\ref{r32}, 
\begin{equation}
\dfrac{\d^2\psi}{\d z_j\d\bar{z_k}}(z)=\varphi''(\abs z^2)\bar{z_j}z_k+\varphi'(\abs z^2)\delta_{jk}.
\label{5star}
\end{equation}
The action of the Monge--Amp\`ere operator on $\psi$ is 
\begin{multline*}
(dd^c\psi)^n=4n!\det\Bigl(\dfrac{\d^2\psi}{\d z_j\d\bar{z_k}}\Bigr)\,dv\\=
4n!(\varphi'(|z|^2))^{n-1}(\varphi'(|z|^2)+|z|^2\varphi''(|z|^2))\,dv.
\end{multline*}

\begin{proposition}\label{prop}
Suppose that $\psi(z)=\varphi(|z|^2)$ is a radial plurisubharmonic function of class $C^2$. Then $\dim \F2w=\infty$ if and only if 
\begin{equation}
\int_{\mathbb C^n}(dd^c\psi)^n=\infty.
\label{2star}
\end{equation}
\end{proposition}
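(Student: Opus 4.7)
My plan is to diagonalize $\F2w$ using the $U(1)^n$ torus symmetry of the radial weight and reduce everything to a one-variable comparison. Because $e^{-\psi}$ is rotation invariant, the monomials $z^\alpha$ are pairwise orthogonal, and a monotone-convergence argument applied to the Taylor expansion $f=\sum_\alpha c_\alpha z^\alpha$ yields the Parseval identity $\normp{f}^2=\sum_\alpha|c_\alpha|^2\normp{z^\alpha}^2$. Hence $\dim\F2w$ equals the number of multi-indices $\alpha$ with $\normp{z^\alpha}<\infty$, and a polar computation with the substitution $t=|z|^2$ gives
$$
\normp{z^\alpha}^2=\gamma_\alpha I_{|\alpha|}, \qquad I_k:=\int_0^\infty t^{k+n-1}e^{-\varphi(t)}\,dt,\qquad \gamma_\alpha>0.
$$
So $\dim\F2w=\infty$ iff $I_k<\infty$ for infinitely many $k\ge 0$.

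Next I compute $\int_{\compl^n}(dd^c\psi)^n$ using the one-variable identity
$$
\frac{d}{dt}\bigl[t^n(\varphi'(t))^n\bigr]=n\,t^{n-1}(\varphi'(t))^{n-1}\bigl(\varphi'(t)+t\varphi''(t)\bigr),
$$
which, after inserting the explicit expression for $(dd^c\psi)^n$ given just before the proposition and passing to polar coordinates, collapses the Monge--Amp\`ere integral to
$$
\int_{\compl^n}(dd^c\psi)^n=C\bigl[t^n(\varphi'(t))^n\bigr]_0^{\infty}=C\lim_{t\to\infty}(t\varphi'(t))^n
$$
for some $C>0$; the lower endpoint vanishes by $C^2$-smoothness of $\varphi$ at $0$. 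Plurisubharmonicity of $\psi$ forces $\varphi'\ge 0$ and $\varphi'+t\varphi''\ge 0$, so $t\varphi'(t)$ is non-decreasing and \eqref{2star} is equivalent to $\lim_{t\to\infty}t\varphi'(t)=\infty$.

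It remains to connect this limit to the integrals $I_k$. If $t\varphi'(t)\to\infty$, then for every $M$ one has $\varphi'(t)\ge M/t$ for $t$ large, hence $\varphi(t)\ge M\log t-O_M(1)$, so $I_k<\infty$ for every $k$ and $\dim\F2w=\infty$. Conversely, if $L:=\lim_{t\to\infty}t\varphi'(t)<\infty$, monotonicity gives $\varphi'(t)\le L/t$, hence $\varphi(t)\le L\log t+O(1)$, and $I_k=\infty$ as soon as $k+n\ge L+1$, leaving only finitely many admissible monomials. The main care required lies in the Parseval step (ruling out that some entire $L^2$-function escapes the span of the admissible monomials) and in verifying that the constant in the polar computation of the Monge--Amp\`ere mass is strictly positive; neither is deep, but both rely crucially on the full rotational symmetry provided by the radial assumption.
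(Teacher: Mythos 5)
Your proof is correct. The first half --- collapsing the Monge--Amp\`ere mass to $C\int_0^\infty d\bigl((t\varphi'(t))^n\bigr)$ via the identity $\frac{d}{dt}\bigl[t^n(\varphi'(t))^n\bigr]=n\,t^{n-1}(\varphi'(t))^{n-1}(\varphi'(t)+t\varphi''(t))$, and hence reducing \eqref{2star} to $\lim_{t\to\infty}t\varphi'(t)=\infty$ --- is exactly the computation in the paper. Where you diverge is in the dimension dichotomy. The paper translates boundedness (resp.\ unboundedness) of $t\varphi'(t)$ into the growth comparison $\psi(z)=O(\log|z|)$ (resp.\ $\log|z|=o(\psi(z))$) and invokes a Liouville-type theorem in one direction and membership of all polynomials in the other; this only uses the two-sided logarithmic bound on $\psi$, not the symmetry itself. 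You instead exploit the torus invariance of $e^{-\psi}$ to orthogonalize the monomials and obtain the Parseval identity $\normp{f}^2=\sum_\alpha|c_\alpha|^2\normp{z^\alpha}^2$ (valid in $[0,\infty]$ by monotone convergence over $|z|<R$), so that $\dim\F2w$ equals the number of multi-indices $\alpha$ with $I_{|\alpha|}<\infty$. Your route buys an exact dimension formula rather than just the finite/infinite dichotomy, and avoids the Liouville theorem entirely; its cost is that it is tied to the radial (or at least torus-invariant) setting, whereas the paper's argument would survive under a mere two-sided comparison of $\psi$ with $\log|z|$. The two points you flag as needing care --- that no $L^2$ entire function escapes the span of the admissible monomials, and the positivity of the constants $\gamma_\alpha$ --- are indeed the only places where the symmetry is essential, and both are handled by the standard orthogonality-plus-monotone-convergence argument you describe.
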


\begin{proof}
Since the spectrum of the matrix \eqref{5star} consists of the eigenvalues  
$\varphi'(\abs z^2)$ and $\varphi'(\abs z^2)+\abs z^2\varphi''(\abs z^2)$, 
the first eigenvalue has multiplicity $n-1$ and the second one has multiplicity $1$, we have $\varphi'\ge 0$, 
$(r\varphi'(r))'\ge 0$ on $\mathbb R_+$. Furthermore, we have 
\begin{multline*}
\int_{\mathbb C^n}(dd^c\psi)^n=C\int_0^\infty (\varphi'(r^2))^{n-1}(\varphi'(r^2)+r^2\varphi''(r^2))\,dr^{2n}
\\=C\int_0^\infty d\bigl((r\varphi'(r))^n\bigr).
\end{multline*}
Thus, \eqref{2star} is equivalent to the relation $\lim_{r\to\infty}r\varphi'(r)=\infty$. Now, if $r\varphi'(r)$ is bounded on $\mathbb R_+$, then 
$\psi(z)=O(\log|z|)$, $|z|\to\infty$, and a version of the Liouville theorem shows that $\dim \F2w<\infty$. On the other hand, if 
$\lim_{r\to\infty}r\varphi'(r)=\infty$, then $\log|z|=o(\psi(z))$, $|z|\to\infty$, and the polynomials belong to $\F2w$. Hence, $\dim \F2w=\infty$.
\end{proof}

For general $C^2$ plurisubharmonic functions, the radial case suggests the following question. 
Is it true that $\dim \F2w=\infty$ if and only if \eqref{2star} holds? Our last example gives a negative answer to this question.

\begin{example} Given subharmonic functions $\psi_j$ on the complex plane, $1\le j\le n$, set
\begin{equation}
\psi(z_1,\ldots,z_n)=\sum_{j=1}^n\psi_j(z_j).
\label{1star}
\end{equation}

Claim: $\dim \F2w<\infty$ if and only if either $\max_j\dim \mathcal F^2_{\psi_j}<\infty$ or $\min_j\dim \mathcal F^2_{\psi_j}=0$.

In one direction, by the Fubini theorem, if $\dim \F2w<\infty$, then $\max_j\dim \mathcal F^2_{\psi_j}<\infty$ or $\min_j\dim \mathcal F^2_{\psi_j}=0$.
In the opposite direction, it is clear that if $\min_j\dim \mathcal F^2_{\psi_j}=0$, then $\F2w=0$. It remains to verify that 
if $\max_j\dim \mathcal F^2_{\psi_j}<\infty$, then $\dim \F2w<\infty$.

First, suppose that $n=2$, $\dim \mathcal F^2_{\psi_1}<\infty$, $N=\dim \mathcal F^2_{\psi_2}<\infty$. Fix 
a basis $(g_k)$, $1\le k\le N$, in the space $\mathcal F^2_{\psi_2}$ and choose a family of points $(w_m)$, $1\le m\le N$, such that $\det Q\not=0$, where 
$Q=\bigl(g_k(w_m)\bigr)_{k,m=1}^N$. 

Next, choose $f\in \F2w$. By the mean value property, 
$$
|f(z,w)|^2\le \frac1\pi \int_{D(z,1)}|f(\zeta,w|^2\,dv(\zeta),\qquad z,w\in\compl.
$$
Therefore, for every $z\in\compl$, the function $f(z,\cdot)$ belongs to $\mathcal F^2_{\psi_2}$, and, hence, we have  
$$
f(z,\cdot)=\sum_{k=1}^Na_k(z)g_k.
$$
In the same way, the functions $f(\cdot,w_j)$, $1\le j\le N$, belong to $\mathcal F^2_{\psi_1}$.

Next,
$$
Q^{-1}\begin{pmatrix}
		f(z,w_1) \\  
		\vdots\\
		f(z,w_N)
		\end{pmatrix} =\begin{pmatrix}
		a_1(z) \\ 
		\vdots\\
		a_N(z)
	\end{pmatrix}. 
$$
Hence, every $a_j$ belongs to $\mathcal F^2_{\psi_1}$. Since $\dim \mathcal F^2_{\psi_1}<\infty$, 
we conclude that the space $\F2w$ has finite dimension. 
For $n\ge2$ we can just use an inductive argument. This completes the proof of Claim.

Let us return to general $\psi$ satisfying \eqref{1star}. We have 
$$
\int_{\mathbb C^n}(dd^c\psi)^n=C\int_{\mathbb C^n}\prod_{j=1}^n \Delta\psi_j(z_j)\,dv(z)
=C\prod_{j=1}^n\int_{\mathbb C} \Delta\psi_j(z_j)\,dv(z_j).
$$

Now, if $n=2$, $\psi_1(z)=|z|^2$, $\Delta \psi_2(z)=\max(1-|z|,0)$, then 
$$
\int_{\mathbb C^n}(dd^c\psi)^n=\infty,
$$ 
but $\F2w=0$. Thus, Proposition~\ref{prop} does not extend to general $C^2$-smooth plurisubharmonic functions. 

\end{example}



\begin{thebibliography}{1}

\bibitem{BT} E.~Bedford, B.~A.~Taylor, \textit{The Dirichlet problem for a complex Monge--Amp\`ere equation}, Invent.\ Math.\ \textbf{37} (1976), 1--44.

\bibitem{BO} E.~Bombieri, \textit{Algebraic values of meromorphic maps}, Invent.\ Math.\ \textbf{10} (1970), 267--287.

\bibitem{DE} J.-P.~Demailly, \textit{Potential Theory in Several Complex Variables}, Cours donn\'e dans le cadre de l'Ecole d'\'et\'e d'Analyse Complexe organis\'ee par le CIMPA, Nice, Juillet 1989,
Manuscript available at \text{\tt www-fourier.ujf-grenoble.fr/$\sim$demailly/manuscripts/nice\_cimpa.pdf} 


\bibitem{Has} F.~Haslinger, \textit{Complex analysis. A functional analytic approach}, 
De Gruyter Graduate, Berlin, 2018.

\bibitem{Ha} W.~Hayman, \textit{The minimum modulus of large integral functions}, Proc.\ London Math.\ Soc.\ (3) \textbf{2} (1952), 469--512.

\bibitem{HO} L.~H\"ormander, \textit{An introduction to complex analysis in several variables}, 
Third edition. North-Holland Mathematical Library, \textbf{7}. North-Holland Publishing Co., Amsterdam, 1990.  

\bibitem{Kis} Ch.~Kiselman, \textit{Ensembles de sous-niveau et images inverses des fonctions plurisousharmoniques}, Bull.\ Sci.\ Math.\ \textbf{124} (2000), 75--92.

\bibitem{RoShi}  G.~Rozenblum, N.~Shirokov, \textit{Infiniteness of zero modes for the Pauli operator with singular magnetic field},
J. Func.\ Anal.\ \textbf{233} (2006), 135--172.

\bibitem{RoShi1}  G.~Rozenblum, N.~Shirokov, \textit{Entire functions in weighted $L^2$ and zero modes of the Pauli operator with non-sign definite magnetic field},
Cubo \textbf{12} (2010), 115--132.

\bibitem{Shigekawa} I.~Shigekawa, \textit{Spectral properties of Schr\"odinger operators with magnetic fields for a spin $1/2$
particle}, J. Func.\ Anal.\ \textbf{101} (1991), 255--285.


\end{thebibliography}

\end{document}